\documentclass[12pt,a4paper,reqno]{amsart}
\allowdisplaybreaks
\usepackage{amsmath}
\usepackage{amsfonts}
\usepackage{amssymb,amsthm,amsfonts,amsthm,latexsym,enumerate,url,cases}
\numberwithin{equation}{section}
\usepackage{mathrsfs}
\usepackage{hyperref}
\hypersetup{colorlinks=true,citecolor=blue,linkcolor=blue,urlcolor=blue}
     \addtolength{\textwidth}{3 truecm}
     \addtolength{\textheight}{1 truecm}
     \setlength{\voffset}{-.6 truecm}
     \setlength{\hoffset}{-1.3 truecm}

\def\pmod #1{\ ({\rm{mod}}\ #1)}

\theoremstyle{plain}
\newtheorem{theorem}{Theorem}

\newtheorem{problem}{Problem}

\theoremstyle{definition}

\usepackage{etoolbox}
\makeatletter
\patchcmd{\@settitle}{\uppercasenonmath\@title}{}{}{}
\patchcmd{\@setauthors}{\MakeUppercase}{}{}{}
\patchcmd{\section}{\scshape}{}{}{}
\makeatother

\begin{document}

\title
[{Note on a problem of S\'ark\"ozy on multiplicative representation functions}]
{Note on a problem of S\'ark\"ozy on multiplicative representation functions}

\author
[Y. Ding] {Yuchen Ding}

\address{(Yuchen Ding) School of Mathematics,  Yangzhou University, Yangzhou 225002, People's Republic of China}
\email{ycding@yzu.edu.cn}

\keywords{Dirichlet's theorem in arithmetic progressions, divisors.}
\subjclass[2010]{11A05}

\begin{abstract}
Motivated by a 2001 problem of S\'ark\"ozy, we classify all situations of the integers $b,c,e$ and $f$ satisfying 
\begin{align*}
\limsup_{n\rightarrow\infty}|d(\mathcal{A},bn+c)-d(\mathcal{A},en+f)|=\infty
\end{align*}
for any infinite $\mathcal{A}\subset \mathbb{N}$, where 
$
d(\mathcal{A},m)=\#\{a\in \mathcal{A}:a|m\}.
$
\end{abstract}
\maketitle

Let $\mathbb{N}$ be the set of natural numbers. In his unsolved problems list, S\'ark\"ozy  \cite[Problem 25]{Sarkozy} asked the following interesting question.

\begin{problem}[S\'ark\"ozy]\label{pro1}
For $\mathcal{A}\subset \mathbb{N}$ and $n\in \mathbb{N}$ let 
$
d(\mathcal{A},n)=\#\{a\in \mathcal{A}:a|n\}.
$
Then is it true that for any infinite set $\mathcal{A}\subset \mathbb{N}$, $|d(\mathcal{A},n+1)-d(\mathcal{A},n)|$ cannot be bounded?
\end{problem}

Let $a_1<a_2<\cdots<a_k$ be the first $k$ elements of $\mathcal{A}$. Then by Dirichlet's theorem in arithmetic progressions \cite[Theorem 7.9]{Apostol} there is a prime $\ell_k$ satisfying 
$$
\ell_k\equiv -1\pmod{a_1a_2\cdots a_k}.
$$
 Hence, we have
$$
\big|d(\mathcal{A}, \ell_k+1)-d(\mathcal{A}, \ell_k)\big|\ge k-2,
$$
from which the affirmative answer to Problem 25 of S\'ark\"ozy clearly follows. In another note \cite{Ding}, I gave a short answer to Problem 26 of S\'ark\"ozy in the same problem list.

Let
$$
f_{\mathcal{A}}(n)=\min_{1\le i\neq j\le 3}\big\{|d(\mathcal{A},n+i)-d(\mathcal{A},n+j)|\big\}.
$$
I asked Prof. Yong-Gao Chen whether $\limsup_{n\rightarrow\infty}f_{\mathcal{A}}(n)=\infty$ for any infinite $\mathcal{A}\subset \mathbb{N}$? Immediately, he gave a negative answer  by taking $A=\{4m:m\in \mathbb{N}\}$ since at least two of $n+1, n+2$ and $n+3$ can not be divisible by $4$. Chen then suggested the following generalized problem. 

\begin{problem}[Chen]\label{pro2}
Determine the sufficient and necessary conditions of integers $b,c,e$ and $f$ satisfying 
\begin{align*}
\limsup_{n\rightarrow\infty}|d(\mathcal{A},bn+c)-d(\mathcal{A},en+f)|=\infty
\end{align*}
for any infinite $\mathcal{A}\subset \mathbb{N}$.
\end{problem}

The following theorem which is a generalization of S\'ark\"ozy's problem provides a complete solution to Problem \ref{pro2}.

\begin{theorem}\label{thm2}
Let $b,c,e$ and $f$ be integers. Then we have
\begin{align}\label{eq4}
\limsup_{n\rightarrow\infty}|d(\mathcal{A},bn+c)-d(\mathcal{A},en+f)|=\infty
\end{align}
for any infinite $\mathcal{A}\subset \mathbb{N}$ 
if and only if one of the following cases holds:

I. $b=c=e=0$ and $f\neq 0$; or $b=c=f=0$ and $e\neq 0$;

II. $b=0$, $e\neq 0$, and $c=0$; or $b\neq 0$, $e= 0$, and $f=0$;

III. $b=0$, $e\neq 0$, $c\neq 0$, and $e|f$; or $b\neq 0$, $e= 0$, $f\neq 0$, and $b|c$;

IV. $be \neq 0$, $bf\neq ec$, and $b|c$; or $be \neq 0$, $bf\neq ec$, and $e|f$.
\end{theorem}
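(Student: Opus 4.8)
The plan is to prove both implications by splitting according to how many of the leading coefficients $b,e$ vanish, using throughout the invariance of \eqref{eq4} under the interchange $(b,c)\leftrightarrow(e,f)$ and the convention $d(\mathcal A,0)=\infty$ (every element of $\mathcal A$ divides $0$). Two mechanisms carry the whole argument: a forcing construction based on Dirichlet's theorem, exactly in the spirit of the solution of Problem~\ref{pro1} recalled above, which shows that the listed configurations force the $\limsup$ to be infinite; and two explicit families of infinite sets that keep the difference bounded, which dispatch the unlisted configurations. Write $\Delta=bf-ec$ for the determinant of the two forms.

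For the ``if'' direction I would first clear the degenerate cases: if one of the forms is identically zero while the other is not (the parts of Cases~I and~II with $b=c=0$, respectively $e=f=0$), then that form contributes $d(\mathcal A,0)=\infty$ whereas the other contributes a finite value for every large $n$, so the difference is infinite for every infinite $\mathcal A$. The substance is Cases~III and~IV, where I assume $b\mid c$, the hypothesis $e\mid f$ being symmetric. Writing $c=bc'$ and setting $m=n+c'$ turns the forms into $bn+c=bm$ and $en+f=em+\delta$ with $\delta=\Delta/b$, a fixed nonzero integer (here $\Delta\neq0$ is used). Given the first $k$ elements $a_1<\dots<a_k$ of $\mathcal A$, put $P=a_1\cdots a_k$ and restrict to $m=Pt$; then $a_1,\dots,a_k$ all divide $bm$, so $d(\mathcal A,bm)\geq k$. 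Along this progression $em+\delta=ePt+\delta=d\bigl(\tfrac{eP}{d}\,t+\tfrac{\delta}{d}\bigr)$ with $d=\gcd(eP,\delta)$, the two inner coefficients are coprime, and Dirichlet's theorem provides $t$ (hence $n\in\mathbb N$) making the inner factor a prime $q>|\delta|$. Since $d\mid\delta$, the quantity $\tau(d)$ is bounded by $\tau(|\delta|)$ independently of $k$, and as $q$ is a large prime every element of $\mathcal A$ dividing $dq$ either divides $d$ or equals $q$ times a divisor of $d$; hence $d(\mathcal A,em+\delta)\leq 2\tau(|\delta|)$. Consequently $|d(\mathcal A,bn+c)-d(\mathcal A,en+f)|\geq k-2\tau(|\delta|)$, and letting $k\to\infty$ yields \eqref{eq4}.

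For the ``only if'' direction I would produce, for each unlisted configuration in which neither form vanishes identically, an infinite $\mathcal A$ for which the difference stays bounded (a configuration with a vanishing form is governed by the sufficiency analysis). If $be\neq0$ and $\Delta=0$, the forms are proportional, $en+f=\tfrac{s}{t}(bn+c)$ with a fixed nonzero rational $s/t$; taking $\mathcal A$ to be the set of primes makes $d(\mathcal A,\cdot)$ equal to the number of distinct prime divisors, and two integers with fixed ratio $s/t$ have numbers of prime divisors differing by at most $\omega(|s|)+\omega(|t|)$, a constant. In every other unlisted non-degenerate configuration one checks that $b\nmid c$ and $e\nmid f$; here I take $\mathcal A=\{Nm:m\geq1\}$, for which $d(\mathcal A,M)=0$ whenever $N\nmid M$. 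Choosing a prime power $p^{\,v_p(c)+1}$ with $v_p(b)>v_p(c)$ and another $p'^{\,v_{p'}(f)+1}$ with $v_{p'}(e)>v_{p'}(f)$, and multiplying them (adjusting by an extra prime to also avoid dividing a nonzero constant form), produces an $N$ with $\gcd(b,N)\nmid c$ and $\gcd(e,N)\nmid f$; then $N\nmid bn+c$ and $N\nmid en+f$ for every $n$, both divisor counts vanish, and the difference is identically $0$.

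The step I expect to be the main obstacle is the uniformity in the forcing construction: one must prevent the ``small'' form $em+\delta$ from absorbing more and more elements of $\mathcal A$ as $k$ grows. This is precisely what the factorisation $em+\delta=d\,q$ with $d\mid\delta$ guarantees, since $\tau(d)\le\tau(|\delta|)$ is fixed once $(b,c,e,f)$ is fixed; the delicate bookkeeping is checking the coprimality hypothesis of Dirichlet's theorem, that $t$ may be chosen with $n\in\mathbb N$, and that $q$ exceeds $|\delta|$ so that the divisors of $dq$ are exactly $\{u,qu:u\mid d\}$. A secondary point is the simultaneous construction of a single modulus $N$ defeating both forms at once, handled by the combined prime-power choice above; the remaining matters—signs of the forms for large $n$, the convention at $0$, and the symmetric roles of the two coefficient pairs—are routine.
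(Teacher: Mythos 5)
Your proposal is correct, and its engine is the same as the paper's: apply Dirichlet's theorem along the progression $m=Pt$ with $P=a_1\cdots a_k$ so that one form is divisible by the first $k$ elements of $\mathcal{A}$ while the other form becomes a prime times a factor of bounded divisor count. The execution, however, differs in several genuine ways. You unify Cases III and IV through the substitution $m=n+c'$ and the determinant $\Delta=bf-ec$, reducing to the pair $(bm,\,em+\delta)$ with $\delta=\Delta/b\neq0$, whereas the paper treats III separately (there only the congruence $n_t\equiv -h\pmod{P_t}$ is needed, no Dirichlet) and IV on its own. To control $d(\mathcal{A},em+\delta)$ you use $\gcd(eP,\delta)\mid\delta$ and hence $\tau(\gcd)\le\tau(|\delta|)$ for each $k$ separately; this neatly avoids the paper's auxiliary quantity $\ell=\sup_t\gcd(eP_t,f-eh)$ and the stabilization step choosing $t_0$, a small but real simplification (the paper's corresponding bound is the cruder $2\ell\le 2|f-eh|$). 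Your necessity counterexamples are also different: for $be\neq0$, $bf=ec$ you take $\mathcal{A}$ to be the primes so that $d(\mathcal{A},\cdot)=\omega(\cdot)$ and exploit $e(bn+c)=b(en+f)$, which works without first knowing $c/b\in\mathbb{Z}$, while the paper uses powers of a single prime $p_0\nmid be$ after having reduced to $g=c/b=f/e\in\mathbb{Z}$; for $b\nmid c$ and $e\nmid f$ you take all multiples of one modulus $N$ built from prime powers $p^{v_p(c)+1}$ with $v_p(b)>v_p(c)$, where the paper takes powers of $b_1e_1$ with $b_1=b/\gcd(b,c)$. Both families of constructions are valid, and your verification that every unlisted non-degenerate configuration falls under one of the two recipes is sound.

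Three loose ends you correctly flag as routine would still need a sentence each in a full write-up: for $e<0$ the prime must be taken in the progression $-\delta/d$ rather than $\delta/d$ so that $t>0$ (exactly the paper's final adjustment); in the degenerate instance of your unified III/IV argument where the second leading coefficient vanishes, $em+\delta$ is the nonzero constant $\delta$ and Dirichlet is not invoked at all; and the configuration $b=c=e=f=0$ must be disposed of explicitly in the necessity. None of these is a gap.
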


\begin{proof}[Proof of Theorem \ref{thm2}]
{\bf Necessities.}
To prove the necessities, we separate them into a few cases.

{\bf Case 1.} $b=e=0$.

It is clear that the situation $b=c=e=f=0$ does not satisfy our requirement. Moreover, at least one of $c$ and $f$ equals zero. Otherwise, we have
\begin{align*}
\limsup_{n\rightarrow\infty}|d(\mathcal{A},c)-d(\mathcal{A},f)|\le |c|+|f|<\infty
\end{align*}
for any infinite $\mathcal{A}\subset \mathbb{N}$ which is a contradiction. ({\bf Case 1.} corresponds with I.)

{\bf Case 2.} $b=0$ and $e\neq 0$; or $b\neq 0$ and $e= 0$.

We only consider the case $b=0$ and $e\neq 0$ since the other situation will be treated similarly. In this case, $c=0$ is admissible since $d(\mathcal{A},en+f)<\infty$ for all sufficiently large $n$ as $en+f\neq 0$ for $n>-f/e$. We are left over to consider the situation $b=0,e\neq 0$, and $c\neq 0$. For any infinite $\mathcal{A}\subset \mathbb{N}$ we clearly have
$
d(\mathcal{A},c)\le |c|.
$
We are going to show $e|f$. Assume the contrary, i.e., $e\nmid f$. Take $\mathcal{A}_e=\{|e|m: m\in \mathbb{N}\}\subset \mathbb{N}$ and then
$$
\limsup_{n\rightarrow\infty}d(\mathcal{A}_e,en+f)=0.
$$ 
It follows that
\begin{align*}
\limsup_{n\rightarrow\infty}|d(\mathcal{A}_e,c)-d(\mathcal{A}_e,en+f)|\le |c|<\infty,
\end{align*}
which is a contradiction. ({\bf Case 2.} corresponds with II and III.)

{\bf Case 3.} $be \neq 0$.

In this case, we firstly show that $c^2+f^2\neq 0$, i.e., at least one of $c$ and $f$ is nonzero. Otherwise, we choose a prime $p_0$ so that $p_0\nmid b$ and $p_0\nmid e$ and take 
\begin{align}\label{newadded1}
\mathcal{A}_0=\{p_0^m: m\in \mathbb{N}\}\subset \mathbb{N}.
\end{align} 
Then we have
\begin{align*}
\limsup_{n\rightarrow\infty}|d(\mathcal{A}_0,bn)-d(\mathcal{A}_0,en)|&=\limsup_{n\rightarrow\infty}|d(\mathcal{A}_0,n)-d(\mathcal{A}_0,n)|=0,
\end{align*} 
which is a contradiction. 

Next, we prove $b|c$ or $e|f$. Assume the contrary. Then we may assume $b=xb_1, c=xc_1, e=ye_1$, and $f=yf_1$ with $b_1,e_1\neq 1$, $\gcd(b_1,c_1)=1$, and $\gcd(e_1,f_1)=1$. We take $\mathcal{A}_{b_1,e_1}=\{(b_1e_1)^m: m\in \mathbb{N}\}\subset \mathbb{N}$. We claim that $d(\mathcal{A}_{b_1,e_1},bn+c)$ and $d(\mathcal{A}_{b_1,e_1},en+f)$ are bounded for all $n$, from which we have
$$
\limsup_{n\rightarrow\infty}|d(\mathcal{A}_{b_1,e_1},bn+c)-d(\mathcal{A}_{b_1,e_1},en+f)|<\infty,
$$
leading to a contradiction with (\ref{eq4}). Actually, for all sufficiently large $m$ if $(b_1e_1)^m|bn+c$, then $b_1|b_1n+c_1$. This is a contradiction with $\gcd(b_1,c_1)=1$ and $b_1>1$. Thus, the function $d(\mathcal{A}_{b_1,e_1},bn+c)$ is bounded for all $n$. The same argument yields that $d(\mathcal{A}_{b_1,e_1},en+f)$ is bounded for all $n$.

We are left over to show $bf\neq ec$. Assume the contrary, i.e., 
$$
\frac{c}{b}=\frac{f}{e}=g,\quad \text{say.}
$$
Since $c^2+f^2\neq 0$, we have $g\neq 0$. Moreover, $g\in \mathbb{Z}$ since $b|c$ or $e|f$. Then for $\mathcal{A}_0$ as in (\ref{newadded1}) we also have
\begin{align*}
\limsup_{n\rightarrow\infty}|d(\mathcal{A}_0,bn+c)-d(\mathcal{A}_0,en+f)|&=\limsup_{n\rightarrow\infty}|d(\mathcal{A}_0,b(n+g))-d(\mathcal{A}_0,e(n+g))|\\
&=\limsup_{n\rightarrow\infty}|d(\mathcal{A}_0,n+g)-d(\mathcal{A}_0,n+g)|=0,
\end{align*} 
which is a contradiction. 
({\bf Case 3.} corresponds with IV.)

{\bf Sufficiencies.} We now turn to the proof of sufficiencies case by case. 

{\bf I. implies (\ref{eq4}).} Clearly.

{\bf II. implies (\ref{eq4}).} Without loss of generality, we only consider the case $b=0$, $e\neq 0$, and $c=0$ since the other one can be proved similarly. We have $d(\mathcal{A},en+f)<\infty$ for any $n\neq -f/e$ since $e\neq 0$. But $d(\mathcal{A},0)=\infty$ for any $n$, from which $(\ref{eq4})$ clearly follows.

{\bf III. implies (\ref{eq4}).} Without loss of generality, we only consider the case $b=0$, $e\neq 0$, $c\neq 0$, and $e|f$ since the other one can be proved similarly. We assume $f=eh$. For any positive integer $t$ let
$$
P_t=\prod_{1\le i\le t}a_i.
$$
For any $t$ there is a positive integer $n_t$ so that
$$
n_t\equiv -h\pmod {P_{t}},
$$
which means that 
$
P_t|e(n_t+h).
$
In other words, $d(\mathcal{A},n_t+h)\ge t$.
Hence, for sufficiently large $t$ we have 
\begin{align*}
\limsup_{n\rightarrow\infty}|d(\mathcal{A},c)-d(\mathcal{A}_g,en+f)|&\ge\limsup_{t\rightarrow\infty}|d(\mathcal{A},c)-d(\mathcal{A},e(n_t+h))|\\
&\ge \limsup_{t\rightarrow\infty}(t-|c|)=\infty.
\end{align*} 

{\bf IV. implies (\ref{eq4}).} Without loss of generality, we only consider the case $be \neq 0$, $bf\neq ec$, and $b|c$ since the other one can be proved similarly.

Suppose that $\mathcal{A}=\{a_1<a_2<\cdots<a_i<\cdots\}$ and $c=bh$. Since $bf\neq ec$, we have $f-eh\neq 0$. For any positive integer $t$ let
$$
P_t=\prod_{1\le i\le t}a_i \quad \text{and} \quad \ell=\sup_{t\ge 1}\big\{\gcd(eP_t,f-eh)\big\}.
$$
Then there is some fixed $t_0$ so that $\ell|eP_{t_0}$ and
\begin{align}\label{newadded2}
P_{t_0}>|h|. 
\end{align}
For any positive integer $m$, we  have 
$$
\gcd\left(\frac{eP_{t_0+m}}{\ell},\frac{f-eh}{\ell}\right)=1.
$$
Now, we will separate the arguments into two cases. 

Suppose first that $e>0$.  
In this case, by Dirichlet's theorem in arithmetic progressions there exists a prime 
\begin{align}\label{newadded3}
p_m>\max\Big\{\frac{|f-eh|}{\ell},\ell\Big\}
\end{align}
satisfying that
\begin{align}\label{eq5}
p_m\equiv \frac{f-eh}{\ell} \pmod{eP_{t_0+m}/\ell},
\end{align}
which clearly means that
\begin{align}\label{eq6}
\ell p_m-f+eh=eP_{t_0+m}q_m
\end{align}
for some integer $q_m$. Note that $p_m>\frac{|f-eh|}{\ell}$ from (\ref{newadded3}). We have $\ell p_m-f+eh>0$ and hence $P_{t_0+m}q_m>0$.
Rearranging the terms in (\ref{eq6}) leads to
\begin{align}\label{eq7}
e(P_{t_0+m}q_m-h)+f=\ell p_m.
\end{align}
Moreover, one notes easily that
\begin{align}\label{eq8}
b(P_{t_0+m}q_m-h)+c=bP_{t_0+m}q_m\equiv 0\pmod{P_{t_0+m}}.
\end{align}
From (\ref{eq7}) and (\ref{eq8}) we have
\begin{align}\label{eq9}
d(\mathcal{A},e(P_{t_0+m}q_m-h)+f)=d(\mathcal{A},\ell p_m)
\end{align}
and
\begin{align}\label{eq10}
d(\mathcal{A},b(P_{t_0+m}q_m-h)+c)\ge t_0+m.
\end{align}
Recall that $p_m>\ell$ for any $m$ from (\ref{newadded3}). Hence 
\begin{align}\label{eq11}
d(\mathcal{A},\ell p_m)\le 2\ell\le 2|f-eh|.
\end{align}
Furthermore, from (\ref{newadded2}) we have 
$
P_{t_0+m}q_m-h>P_{t_0}-h>0.
$
We conclude from (\ref{eq9}), (\ref{eq10}) and (\ref{eq11}) that for sufficiently large $m$,
\begin{align*}
\limsup_{n\rightarrow\infty}|&d(\mathcal{A},bn+c)-d(\mathcal{A},en+f)|\\
&\ge\limsup_{m\rightarrow\infty}|d(\mathcal{A},b(P_{t_0+m}q_m-h)+c)-d(\mathcal{A},e(P_{t_0+m}q_m-h)+f)|\\
&=\limsup_{m\rightarrow\infty}|d(\mathcal{A},b(P_{t_0+m}q_m-h)+c)-d(\mathcal{A},\ell p_m)|\\
&\ge \limsup_{m\rightarrow\infty}\big(t_0+m-2|f-eh|\big)=\infty.
\end{align*}

It remains to consider the case $e<0$. In this case, we simply replace (\ref{eq5}) by
$$
p_m\equiv -\frac{f-eh}{\ell} \pmod{eP_{t_0+m}/\ell}.
$$
Then (\ref{eq6}) will be changed to
$$
-\ell p_m-f+eh=eP_{t_0+m}q_m.
$$
We see easily that $q_m$ is also positive. Adjusting the arguments accordingly would yield our desired result. 
\end{proof}

Motivated by Chen's problem, I have the following extended one.
Let $F(n)$ and $G(n)$ be integer polynomials of degrees $r$ and $s$, respectively.

\begin{problem}[S\'ark\"ozy's Problem 25 revisited]\label{problem-3}
Let $r$ and $s$ be given integers. Classify all the cases of $F(n)$ and $G(n)$ such that
\begin{align*}
\limsup_{n\rightarrow\infty}\big|d\big(\mathcal{A},F(n)\big)-d\big(\mathcal{A},G(n)\big)\big|=\infty
\end{align*}
for any infinite $\mathcal{A}\subset \mathbb{N}$.
\end{problem}

\medskip

Theorem \ref{thm2} provides the answer to Problem \ref{problem-3} for $r, s\le 1$.

\section*{Acknowledgments}
The author thanks Yong-Gao Chen and Honghu Liu for their interest in this work. Thanks also go the anonymous referees for their helpful comments.

The author is supported by National Natural Science Foundation of China  (Grant No. 12201544) and China Postdoctoral Science Foundation (Grant No. 2022M710121).

\end{document}